\newtheorem{theorem}{Theorem}[section]
\newtheorem{corollary}[theorem]{Corollary}
\newtheorem{proposition}[theorem]{Proposition}
\theoremstyle{definition}
\newtheorem{definition}[theorem]{Definition}
\theoremstyle{remark}
\newtheorem{remark}[theorem]{Remark}
\newcommand{\PP}{\mathbb{P}}
\keywords{Hyperbolicity, moduli space of marked points, blow-up}
\subjclass[2020]{Primary: 14E30, 14D10; Secondary: 14M25, 14J17.}
\begin{document}
\pagestyle{plain}

\title{Log Algebraic Hyperbolicity of $\overline{M}_{0,n}$}

\author[J. Wang]{Jiahe Wang}
\address{UCLA Mathematics Department, Box 951555, Los Angeles, CA 90095-1555, USA
}
\email{jiahewang@math.ucla.edu}

\begin{abstract}
We show that the moduli space of stable n-pointed rational curves $\overline{M}_{0,n}$ with its boundary $\Delta$ is algebraically hyperbolic.
\end{abstract}

\maketitle

\section{Introduction}

\begin{definition}
Given a complex projective variety $X$ and an effective divisor $D_X$, we say that the pair $(X,D_X)$ is \emph{algebraically hyperbolic} if there exists a uniform bound $\epsilon>0$ and an ample divisor $A$ such that for any integral curve $C$ in $X$ not contained in $D_X$, we have that \[2g(C)-2+i(C,D_X) \geq \epsilon\cdot \mathrm{deg}_A C\]where $g(C)$ is the geometric genus of $C$ and $i(C,D_X) = |f^{-1}(D_X)|$ where $f:\widehat{C}\rightarrow X$ the normalization map. We say $X$ is \emph{algebraically hyperbolic} if $X$ with the empty set is algebraically hyperbolic. 
\end{definition}
Given two pairs $(X,D_X)$, $(Y,D_Y)$ related by some map, one may ask whether the algebraic hyperbolicity of $(Y,D_Y)$ implies the algebraic hyperbolicity $(X,D_X)$. To establish this for $(X,D_X)$, one may consider if $X$ introduces more curves compared to $Y$, how genus and degree change under the map to $X$, and how the intersection behavior of $(X,D_X)$ compares with that of $(Y,D_Y)$. For example, for a finite étale cover $f:X\rightarrow Y$, one may show that the algebraic hyperbolicity of the base $Y$ implies the algebraic hyperbolicity of $X$. 

In the case of a smooth blow-up, the situation is straightforward as the genus and intersection numbers in many cases remain unchanged, so only the degree needs to be considered. We will see that this argument applies to $\overline{M}_{0,n}$, the moduli space of stable $n$-pointed rational curves, and its boundary, showing that the pair is algebraically hyperbolic.

We begin by recalling some basics of $\overline{M}_{0,n}$, its construction as a sequence of blow-ups from $(\PP^1)^{n-3}$ and $\mathbb P^{n-3}$, its boundary divisor $\Delta$, and its ample divisor related to the boundary. We then examine how log algebraic hyperbolicity behaves under blow-ups. Finally, we examine the algebraic hyperbolicity of the base pair for $(\PP^1)^{n-3}$ and $\mathbb P^{n-3}$. Together they yield the following:
\begin{theorem}
$(\overline{M}_{0,n}, \Delta)$ is algebraically hyperbolic. For $n\geq 4$, we may take $\epsilon = \frac 1{(n-3)^2}$ with respect to the polarization $K_{\overline{M}_{0,n}}+\Delta$.
\end{theorem}

Any curve $C$ inside $\overline{M}_{0,n}$ not entirely contained in the boundary $\Delta$ gives a one-parameter family of stable rational curves with $n$ markings whose general element is smooth. The algebraically hyperbolic condition of $(\overline{M}_{0,n}, \Delta)$ means that $2g(C)-2+i(C,\Delta)\geq\frac1{(n-3)^2}\cdot \mathrm{deg}_{K+\Delta}C$, where $i(C,\Delta)$ = $|f^{-1}(\Delta)|$ with $f:\widehat{C}\to X$ being the normalization map. So this gives a lower bound for the number of intersection points with the boundary. 

\subsection*{Acknowledgments}
I thank Wern Yeong for bringing up this project and for valuable teaching. I thank Dawei Chen for proposing this project and helpful comments. I am grateful to Burt Totaro for helpful comments and suggestions. I am grateful to Fernando Figueroa, Eric Riedl, and Lingyao Xie for helpful comments.

\section{Preliminaries}
\subsection{$\overline{M}_{0,n}$: the moduli space of stable \(n\)-pointed rational curves}\mbox{}\\

We recall basic facts about $\overline{M}_{0,n}$ and its boundary divisor $\Delta$. Standard references include \cite{Knudsen1983, keel1992intersection}.
\begin{definition}
For \(n\ge 3\),
\[
M_{0,n}
= \{(\mathbb P^1,p_1,\ldots,p_n) \mid p_i \neq p_j \text{ for } i\neq j\}/\mathrm{PGL}_2.
\]
Since an automorphism of \(\mathbb{P}^1\) moves three marked points to \(0,1,\infty\), we have \(\dim M_{0,n}=n-3\).
The Deligne–Mumford–Knudsen compactification, denoted \(\overline{M}_{0,n}\), is a smooth projective variety parametrizing stable \(n\)-pointed rational curves.
\end{definition}

\begin{definition}
The \emph{boundary divisor} is \(\Delta := \overline{M}_{0,n} \setminus M_{0,n}\), which is a simple normal crossings divisor in $\overline{M}_{0,n}$.
It decomposes as \[\Delta = \sum_{\substack{I \sqcup J = \{1, \ldots, n\} \\ 2 \le |I|, |J| \le n-2 \, / \sim}} D_{I,J},
\]
where $\sim$ identifies two partitions as the same if we swap $I$ and $J$. Each irreducible component \(D_{I,J}\) parametrizes stable curves with two rational components meeting at one node, with the marked points indexed by \(I\) on one component and those indexed by \(J\) on the other.
We let \(\overline{M}_{0,n}=\varnothing\) for \(n\le 2\). And we note that \(\overline{M}_{0,3}=\mathrm{pt}\) and \(\overline{M}_{0,4}\simeq \mathbb{P}^1\) with \(\Delta_{\overline{M}_{0,4}}=\{0,1,\infty\}\).
\end{definition}
We have the following theorem about the ampleness of the canonical divisor plus the boundary divisor:
\begin{theorem}[\cite{keel2005equationsbarm0n}, Theorem 1.1]
$K_{\overline{M}_{0,n}}+\Delta$ is very ample.
\end{theorem}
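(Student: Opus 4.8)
The plan is to prove very ampleness by induction on $n$ using the forgetful morphism (universal curve) $\pi \colon \overline{M}_{0,n+1} \to \overline{M}_{0,n}$, after first re-expressing $K_{\overline{M}_{0,n}} + \Delta$ as the tautological class $\kappa_1$. In genus $0$ Mumford's formula reads $K_{\overline{M}_{0,n}} = \sum_{i=1}^n \psi_i - 2\Delta$, so that $K_{\overline{M}_{0,n}} + \Delta = \sum_{i=1}^n \psi_i - \Delta = \kappa_1$, where the $\psi_i$ are the cotangent line classes. The engine of the induction is the pullback formula for $\kappa_1$ under the map forgetting the last point, $\kappa_1^{(n+1)} = \pi^* \kappa_1^{(n)} + \psi_{n+1}$, which I would record equivalently as
\begin{equation*}
K_{\overline{M}_{0,n+1}} + \Delta_{n+1} = \pi^*\bigl(K_{\overline{M}_{0,n}} + \Delta_n\bigr) + \psi_{n+1}. \tag{$\star$}
\end{equation*}
I would first verify the base case: $\overline{M}_{0,4} \cong \PP^1$ with $\Delta = \{0,1,\infty\}$ gives $K + \Delta = \OO_{\PP^1}(1)$, which is very ample (and $\overline{M}_{0,3}$ is a point).

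Second, I would establish the relative positivity of $\psi_{n+1}$ along $\pi$. Over a moduli point $(C; p_1, \dots, p_n)$ the fibre of $\pi$ is the curve $C$ itself, and $\psi_{n+1}$ restricts there to the log-canonical bundle $\omega_C(p_1 + \cdots + p_n)$. By stability every component of $C$ carries at least three special points, so this bundle has degree $-2 + (\#\text{special points}) \ge 1$ on each component; hence it is ample on every fibre, i.e.\ $\psi_{n+1}$ is $\pi$-ample, and for $n \ge 4$ its fibrewise degree is $\ge 2$ on the smooth fibres, which already makes it $\pi$-very ample there. Because the fibres have arithmetic genus $0$, one also has $R^1\pi_*(\psi_{n+1}) = 0$, so $\pi_*(\psi_{n+1})$ is a vector bundle whose formation commutes with base change and whose fibres compute the fibrewise sections.

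Third, the inductive step: assuming $K_{\overline{M}_{0,n}} + \Delta_n$ is very ample, I would combine it with the $\pi$-positivity of $\psi_{n+1}$ through $(\star)$. Granting fibrewise very ampleness (the delicate point, discussed below), relative very ampleness gives a closed $\overline{M}_{0,n}$-embedding $\overline{M}_{0,n+1} \hookrightarrow \PP(\pi_*\psi_{n+1})$ with $\OO(1)$ pulling back to $\psi_{n+1}$, and then $(\star)$ exhibits $K_{\overline{M}_{0,n+1}} + \Delta_{n+1}$ as $\OO(1)$ twisted by the pullback of the very ample class $K_{\overline{M}_{0,n}}+\Delta_n$. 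Sections separate points and tangent vectors lying in a common fibre by relative very ampleness, while sections coming from the base (via $\pi^*$ and the inductive hypothesis) separate points and tangent directions transverse to the fibres; the vanishing $R^1\pi_* = 0$ is what guarantees that $H^0(\overline{M}_{0,n+1}, K_{\overline{M}_{0,n+1}}+\Delta_{n+1})$ is rich enough to realize both separations at once.

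The main obstacle is the \emph{sharpness} of $(\star)$: the coefficient of $\pi^*(K_{\overline{M}_{0,n}} + \Delta_n)$ is exactly $1$, so the usual principle that a relatively ample bundle plus a large multiple of a pulled-back ample bundle is very ample has no room to absorb an auxiliary twist, and the argument must show that no twist is needed. This is delicate precisely over the boundary, where the fibres degenerate to nodal trees of $\PP^1$'s: there $\psi_{n+1}$ has degree only $1$ on the smallest components, so very ampleness and the surjectivity of the relevant restriction maps on global sections have to be checked by hand, using the genus-$0$ geometry (vanishing of $H^1$ on each nodal fibre and the gluing compatibility across nodes). An alternative route, sidestepping the induction, is to follow Keel--Tevelev and exhibit an explicit spanning set of $H^0(K+\Delta)$, verifying directly that it separates points and tangent vectors; I would keep this in reserve as a cross-check on the boundary behaviour.
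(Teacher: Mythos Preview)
The paper does not contain a proof of this statement: it is quoted as Theorem~1.1 of Keel--Tevelev and used only as an input to the later arguments. So there is nothing in the paper to compare your proposal against.

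As for the proposal itself, the identities you set up are correct ($K+\Delta=\kappa_1$ and $\kappa_1^{(n+1)}=\pi^*\kappa_1^{(n)}+\psi_{n+1}$), and your worry about the degree-$1$ twigs is in fact harmless: on a nodal tree of $\PP^1$'s a line bundle of degree $\ge 1$ on every component is already very ample, so $\psi_{n+1}$ is genuinely $\pi$-very ample. The real gap is the one you flag but do not close. From ``$A$ very ample on the base'' and ``$L$ $\pi$-very ample with $R^1\pi_*L=0$'' it does \emph{not} follow that $\pi^*A\otimes L$ is very ample. Separating two points in a common fibre over $y$ requires the restriction $H^0(Y,A\otimes\pi_*L)\to (A\otimes\pi_*L)\otimes k(y)$ to be surjective, i.e.\ global generation of the vector bundle $A\otimes\pi_*L$; separating tangent vectors needs still more. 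None of this comes for free when the coefficient of $A$ is exactly $1$---you would need to prove some positivity of $\pi_*\psi_{n+1}$ itself (for instance that it is globally generated, or nef as a vector bundle), and you have not done so. Keel--Tevelev's actual argument sidesteps this entirely by producing explicit sections of $\kappa_1$ via products of forgetful maps to $\overline{M}_{0,4}\cong\PP^1$ and checking directly that they embed; that is essentially the ``alternative route'' you mention at the end, and it is the one that actually carries the weight.
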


\subsection{Blow-up structures of $\overline{M}_{0,n}$}\mbox{}\\
$\overline{M}_{0,n}$ can be realized as a sequence of smooth blow-ups. This is realized in Kapranov \cite{kapranov1992chowquotientsgrassmanniani}, where $\overline{M}_{0,n}$ is constructed as blow-ups of $\mathbb{P}^{n-3}$. Keel \cite{keel1992intersection} realizes $\overline{M}_{0,n}$ as a sequence of blow-ups of $(\mathbb{P}^1)^{n-3}$. These models are particularly valuable for explicit computations of intersection theory and for understanding the birational geometry of $\overline{M}_{0,n}$. Both blow-up structures are stated in Hassett \cite{hassett2002modulispacesweightedpointed}.

\vspace{0.5cm}

We first consider the construction of Keel, which realizes $\overline{M}_{0,n}, n\geq 5$ as a sequence of blow-ups of $(\mathbb P^1)^{n-3}$ along loci of diagonals: 

For any $n\geq 5$, consider $(\mathbb P^1, p_1,...,p_n)$, where $p_1,...,p_n$ are distinct points. There exists a unique automorphism $\phi$ of $\mathbb P^1$ such that it maps 
$$ \phi: (p_1,p_2,p_3) \mapsto (0,1,\infty).$$
The image of the $(\mathbb P^1, p_1,...,p_n)$ in $M_{0,n}$ is determined by the points
\[
(\phi(p_4),\ldots,\phi(p_n)),
\]
and we obtain an embedding
\[
M_{0,n} \hookrightarrow (\mathbb{P}^1)^{n-3}.
\]

Let $\Delta_d$ denote the union of the dimension $d$ diagonals, i.e., the locus where at least 
$n-2-d$ of the points coincide. We will use this notation for both the locus in $(\mathbb{P}^1)^{n-3}$ and its proper transforms. Let
\[
F_0 = \mathrm{pr}_1^{-1}(0) \cup \mathrm{pr}_2^{-1}(0) \cup \cdots \cup \mathrm{pr}_{n-3}^{-1}(0)
\]
be the locus of points mapping to $0$ under one of the projections $\mathrm{pr}_j$;
we define $F_1$ and $F_\infty$ analogously.
Again, we use the same notation for proper transforms. We note that $M_{0,n}\cong (\mathbb P^1)^{n-3}\setminus (F_0\cup F_1\cup F_\infty\cup\Delta_{n-4})$. In the blow-up structure below, the boundary divisor $\Delta$ of $\overline{M}_{0,n}$ corresponds to the exceptional divisors plus the strict transform of $F_0\cup F_1\cup F_\infty\cup\Delta_{n-4}$.

\begin{theorem}[\cite{hassett2002modulispacesweightedpointed}, section 6.3]\label{thmk}
We factor $\rho : \overline{M}_{0,n} \to (\mathbb{P}^1)^{n-3}$ as a product of blow-ups.
Write $Y_0[n] = (\mathbb{P}^1)^{n-3}$ and define the first sequence of blow-ups as follows:

\begin{align*}
1 &: Y_1[n] \text{ is the blow-up along the intersection }\Delta_1 \cap (F_0 \cup F_1 \cup F_\infty);\\
2 &: Y_2[n]\text{ is the blow-up along the intersection }\Delta_2 \cap (F_0 \cup F_1 \cup F_\infty);\\
&\hspace{1cm}\vdots\\
n-4&: Y_{n-4}[n]\text{ is the blow-up along the intersection }\Delta_{n-4} \cap (F_0 \cup F_1 \cup F_\infty).
\end{align*}

The second sequence of blow-ups is:
\begin{align*}
n-3 &:Y_{n-3}[n] \text{ is the blow-up along } \Delta_1;\\
n-2&:Y_{n-2}[n]  \text{ is the blow-up along } \Delta_2;\\
&\hspace{1cm}\vdots\\
2n-9&:Y_{2n-9}[n]    \text{ is the blow-up along } \Delta_{n-5}.
\end{align*}

\end{theorem}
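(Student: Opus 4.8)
The plan is to make $\rho$ explicit and then factor it. For each $j \in \{1,\dots,n-3\}$, forgetting all marked points except $p_1,p_2,p_3,p_{j+3}$ and stabilizing gives a morphism $\rho_j \colon \overline{M}_{0,n} \to \overline{M}_{0,4} \cong \mathbb{P}^1$, and $\rho = (\rho_1,\dots,\rho_{n-3})$. First I would record that each $\rho_j$ is a morphism, that $\rho$ restricts to the embedding $M_{0,n} \hookrightarrow (\mathbb{P}^1)^{n-3}$, and hence that $\rho$ is a birational morphism of smooth projective varieties which is an isomorphism over $M_{0,n}$. The $j$-th coordinate $\rho_j$ takes the value $0$ (resp.\ $1,\infty$) exactly when $p_{j+3}$ collides with $p_1$ (resp.\ $p_2,p_3$), so the loci $F_0, F_1, F_\infty$ are the coordinate divisors, a diagonal $\Delta_d$ is where the relevant coordinates agree, and a boundary divisor $D_{I,J}$ is contracted by $\rho$ precisely when its image has codimension $\ge 2$; this dictionary between boundary divisors of $\overline{M}_{0,n}$ and the diagonal/coordinate strata of $(\mathbb{P}^1)^{n-3}$ is what I would use throughout.

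I would establish the factorization by induction on $n$, comparing the forgetful tower with the coordinate projections: forgetting $p_n$ fits $\rho$ for $n$ and for $n-1$ into a commutative square over the projection $(\mathbb{P}^1)^{n-3} \to (\mathbb{P}^1)^{n-4}$, so that, granting the blow-up description for $n-1$, it remains to analyze the universal-curve direction $\overline{M}_{0,n} \to \overline{M}_{0,n-1}$ and to identify the new centers produced by the $n$-th point. For each individual step I would argue locally: in the analytic charts on $\overline{M}_{0,n}$ given by the smoothing parameters of the nodes together with the cross-ratio coordinates, a collision of marked points, or a collision at one of the fixed points $0,1,\infty$, is controlled by a single parameter, and comparing it with the corresponding coordinate on $(\mathbb{P}^1)^{n-3}$ exhibits the map as the blow-up along the stated smooth center. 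The order is forced by two features visible in this local picture. The diagonals are nested, $\Delta_1 \subset \Delta_2 \subset \cdots \subset \Delta_{n-4}$, so to blow them up along smooth centers one proceeds from the deepest stratum outward, which is why both sequences are indexed by increasing $d$; and the intersections $\Delta_d \cap (F_0 \cup F_1 \cup F_\infty)$, where a point additionally runs into $0,1,\infty$, must be resolved before the pure diagonals in order to keep the total boundary normal crossings. The pure diagonals are blown up only up to $\Delta_{n-5}$ because $\Delta_{n-4}$, the locus where two of the points already agree, is a divisor and so requires no further blow-up; this is exactly the stopping condition in the theorem. (The same combinatorics can be motivated by Hassett's moduli of weighted pointed curves, where lowering the tolerance for collisions makes the corresponding boundary divisors appear or disappear, but the target $(\mathbb{P}^1)^{n-3}$ is a product of copies of $\overline{M}_{0,4}$ rather than a single weighted moduli space, so I would carry out the step-by-step verification directly.)

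The main obstacle is this combinatorial bookkeeping of the nested strata: at each of the $2n-9$ stages I must verify that the proper transform of the prescribed center is smooth, that it is precisely the locus where the partially resolved map fails to be an isomorphism, and that the prescribed order---all $F$-intersections first, by increasing dimension, then all pure diagonals up to $\Delta_{n-5}$---yields a variety \emph{isomorphic} to $\overline{M}_{0,n}$ and not merely birational to it. To close this last point I expect to use the universal property of $\overline{M}_{0,n}$: I would exhibit on the final blow-up a flat family of stable $n$-pointed rational curves extending the universal family over $M_{0,n}$, obtain the induced morphism to $\overline{M}_{0,n}$, and check that it is inverse to the lift of $\rho$ provided by the universal property of blowing up.
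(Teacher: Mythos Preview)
The paper does not prove this theorem: it is quoted from Hassett \cite{hassett2002modulispacesweightedpointed}, section~6.3, and used as a black box. There is therefore no proof in the paper to compare your proposal against.

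That said, one point in your sketch is worth flagging. You write that $(\mathbb{P}^1)^{n-3}$ ``is a product of copies of $\overline{M}_{0,4}$ rather than a single weighted moduli space,'' and conclude that you would bypass Hassett's framework and verify the factorization directly. In fact $(\mathbb{P}^1)^{n-3}$ \emph{is} a single moduli space $\overline{M}_{0,\mathcal{A}}$ of weighted pointed rational curves for a suitable weight vector $\mathcal{A}$ (give $p_1,p_2,p_3$ weight $1$ and $p_4,\dots,p_n$ sufficiently small weights), and Hassett's proof of the factorization proceeds exactly by interpolating between $\overline{M}_{0,n}$ and this space through a chain of weighted moduli spaces, where each wall-crossing is a single smooth blow-down along the stated center. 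This is what makes the smoothness of centers and the identification of the final variety with $\overline{M}_{0,n}$ automatic, rather than requiring the ad hoc local-coordinate and universal-property checks you outline. Your direct approach is not wrong in principle, but the bookkeeping you correctly identify as the main obstacle is precisely what the weighted-stability machinery is designed to absorb.
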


The second construction is given by Kapranov, which realizes $\overline{M}_{0,n}, n\geq5$ as a sequence of blow-ups of $\mathbb{P}^{n-3}$ at $n-1$ general points and the linear subspaces that they span. Moreover, the boundary divisor corresponds to the exceptional divisors plus the strict transforms of the $\binom{n-1}{n-3}$ hyperplanes in $\mathbb{P}^{n-3}$.

\begin{theorem}[\cite{hassett2002modulispacesweightedpointed}, section 6.2]\label{thm2}
Given $p_1,...,p_{n-1}$ points in general position in $\mathbb P^{n-3}$, $\overline{M}_{0,n}$ is a sequence of blow-ups of $\mathbb{P}^{n-3}$:
\begin{align*}
1 &: \text{ Blow up the points } p_1,\ldots,p_{n-1}\text{ to obtain }X_1[n];\\
2 &: \text{ Blow up lines spanned by pairs of the points }p_1,\ldots,p_{n-1} \text{ to obtain }X_2[n];\\
3 &: \text{ Blow up 2-planes spanned by triples of the points to obtain }X_3[n];\\
 & \hspace{0.5cm}\vdots\\
n-4 &: \text{ Blow up }(n-5)\text{-planes spanned by }(n-4)\text{-tuples of the points to obtain }X_{n-4}[n].
\end{align*}

We interpret the exceptional divisors of the induced maps. For each partition $\{1,\ldots,n\} = I \cup J$ with $I = \{i_1 = n, i_2,\ldots,i_r\}$, $J = \{j_1,\ldots,j_{n-r}\}$ and $2 \leq r \leq n-2$, consider the corresponding boundary divisor in $\overline{M}_{0,n}$: \[ D_{I,J} \simeq \overline{M}_{0,r+1} \times \overline{M}_{0,n-r+1}. \]

The divisors $D_{I,J}$ with $|I| = r < n-2$ are the exceptional divisors of $X_{r-1}[n] \to X_{r-2}[n]$. The divisors $D_{I,J}$ with $|I| = n-2$ are proper transforms of the hyperplanes of $X_0[n] \simeq \mathbb{P}^{n-3}$ spanned by $p_{i_2},\ldots,p_{i_{n-2}}$.
\end{theorem}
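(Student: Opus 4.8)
The plan is to realize $\overline{M}_{0,n}$ as the closure of the space of $n$-point configurations lying on a rational normal curve, following Kapranov, and to read off both the blow-up tower and the exceptional divisors from the degeneration of these configurations. The starting observation is that $n$ points in general position in $\mathbb{P}^{n-3}$ lie on a \emph{unique} rational normal curve of degree $n-3$. Fixing general points $p_1,\dots,p_{n-1}\in\mathbb{P}^{n-3}$ and letting the last point $x$ vary, the assignment $x\mapsto [(C_x;p_1,\dots,p_{n-1},x)]$, where $C_x$ is the unique rational normal curve through $p_1,\dots,p_{n-1},x$, defines a birational map $g:\mathbb{P}^{n-3}\dashrightarrow\overline{M}_{0,n}$. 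More invariantly, I would instead construct its inverse $\phi:\overline{M}_{0,n}\to\mathbb{P}^{n-3}$ as the morphism attached to the complete linear system of the cotangent class $\psi_n$ at the $n$-th point, checking that $\psi_n$ is base-point free with $h^0(\psi_n)=n-2$ and that $\phi$ is birational (e.g.\ from $\psi_n^{\,n-3}=1$). This exhibits $\mathbb{P}^{n-3}$ as a distinguished birational contraction of $\overline{M}_{0,n}$.

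The core of the argument is to show that $\phi$ factors as the stated iterated blow-up. I would locate the locus where $g$ fails to be a local isomorphism, equivalently the locus contracted by $\phi$: as $x$ tends to the linear span $L_S$ of a subset $S=\{p_{i_2},\dots,p_{i_r}\}\subseteq\{p_1,\dots,p_{n-1}\}$, the curve $C_x$ degenerates and $x$ collides with the points of $S$, so a component bubbles off carrying the labels $I=\{n,i_2,\dots,i_r\}$ while the direction of approach to $L_S$ records the residual cross-ratio data on the bubble. Thus the contracted loci are exactly the spans $L_S$, of dimension $|S|-1$, and one must blow these up in order of increasing dimension: first the $n-1$ points ($|S|=1$), then the $\binom{n-1}{2}$ lines $\overline{p_ip_j}$ ($|S|=2$), and so on up to the $(n-5)$-planes ($|S|=n-4$). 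The claim to verify is that at each stage the proper transforms of the next batch of spans are smooth, so that every step is a genuine smooth blow-up, and that the universal property of blow-ups then assembles $\phi$ into the composite $X_{n-4}[n]\to\cdots\to X_0[n]=\mathbb{P}^{n-3}$.

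To identify the exceptional divisors I would match moduli interpretations. A point over the exceptional divisor above $L_S$, where $|S|=r-1$ (so the centre has dimension $r-2$ and the blow-up is step $r-1$), corresponds to a stable curve with two components meeting at a node, one carrying the markings $I=\{n,i_2,\dots,i_r\}$ and the other carrying $J$; this is precisely $D_{I,J}$, and the product structure $D_{I,J}\simeq\overline{M}_{0,r+1}\times\overline{M}_{0,n-r+1}$ arises from varying the configurations on the two components independently, the node counting as an extra marking on each. The bookkeeping $\dim D_{I,J}=(r-2)+(n-r-2)=n-4$ is consistent with an exceptional divisor. Finally, the components $D_{I,J}$ with $|I|=n-2$ (that is $|J|=2$) are never blow-up centres: the last step, step $n-4$, blows up spans with $|S|=n-4$, i.e.\ $|I|=n-3$, whereas these correspond to $x$ landing on a hyperplane spanned by $n-3$ of the $p_i$, so they appear as the proper transforms of the $\binom{n-1}{n-3}$ such hyperplanes.

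The step I expect to be the main obstacle is the verification in the second paragraph: that the indeterminacy of $g$ is resolved by exactly this ordered sequence of \emph{smooth} blow-ups, and in particular that the proper transforms of the linear spans remain smooth (with the correct scheme structure) after the earlier rounds. The cleanest route is induction on $n$ via the forgetful morphism $\overline{M}_{0,n}\to\overline{M}_{0,n-1}$ presenting $\overline{M}_{0,n}$ as the universal curve, together with the compatibility of the Kapranov maps $\phi_n$ and $\phi_{n-1}$ in consecutive dimensions; the general-position hypothesis on $p_1,\dots,p_{n-1}$ is precisely what keeps the successive centres smooth. As the statement is Kapranov's theorem as recorded by Hassett, I would cite \cite{kapranov1992chowquotientsgrassmanniani,hassett2002modulispacesweightedpointed} for the full details and use the foregoing as the conceptual skeleton.
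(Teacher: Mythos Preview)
The paper does not prove this statement; it is quoted as a background result from the literature (Hassett \cite{hassett2002modulispacesweightedpointed}, \S6.2, itself a reformulation of Kapranov \cite{kapranov1992chowquotientsgrassmanniani}), and is used only as input to the hyperbolicity argument. So there is no ``paper's own proof'' to compare against.

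That said, your sketch is a faithful outline of Kapranov's original argument: the birational map $\mathbb{P}^{n-3}\dashrightarrow\overline{M}_{0,n}$ via the unique rational normal curve through $p_1,\dots,p_{n-1},x$, its inverse realized by the linear system $|\psi_n|$, the identification of the indeterminacy locus with the linear spans $L_S$, and the resolution by blowing these up in increasing dimension. Your bookkeeping on which $D_{I,J}$ arise as exceptional divisors versus proper transforms of hyperplanes is correct, and you correctly flag the only nontrivial technical point (smoothness and disjointness of the successive proper transforms of the centres). Since the paper simply imports the result, citing Kapranov and Hassett as you propose is exactly what is done there.
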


\subsection{Log algebraic hyperbolicity}

\begin{definition}[{\cite[Definition 1.10]{chen2001algebraichyperbolicitylogvarieties}}]
     Given a complex projective variety $X$ and an effective divisor $D_X$, we say that the pair $(X,D_X)$ is \emph{algebraically hyperbolic} if there exists a uniform bound $\epsilon>0$ and an ample divisor $A$ such that for any integral curve $C$ in $X$ not contained in $D_X$, we have that \[2g(C)-2+i(C,D_X) \geq \epsilon\cdot \mathrm{deg}_A C\]where $g(C)$ is the geometric genus of $C$ and $i(C,D_X) = |f^{-1}(D_X)|$ where $f:\widehat{C}\rightarrow X$ the normalization map. We say $X$ is \emph{algebraically hyperbolic} if $X$ with the empty set is algebraically hyperbolic. 
\end{definition}

Algebraic hyperbolicity was introduced as an algebraic analogue to Kobayashi hyperbolicity for complex manifolds \cite{Demailly1997}. A complex manifold \(X\) is said to be Kobayashi hyperbolic if its Kobayashi pseudometric is nondegenerate, and Brody hyperbolic if every entire map \(f:\mathbb{C}\to X\) is constant. For smooth projective varieties, Kobayashi hyperbolicity implies algebraic hyperbolicity and the converse is conjectured to be true \cite{Demailly1997}. For the log case, Kobayashi hyperbolicity of the complement with the extra condition of being hyperbolically imbedded implies algebraic hyperbolicity \cite[Theorem 5]{pacienza2006logarithmickobayashiconjecture}. Brody hyperbolicity is in general weaker than Kobayashi hyperbolicity but is equivalent to Kobayashi hyperbolicity for compact manifolds \cite{Brody1978}.

Algebraic hyperbolicity or log algebraic hyperbolicity is important because it gives information about the genus and degree of curves inside a variety, as well as intersection behavior for the log case. In particular, if $X$ is algebraically hyperbolic, then we know that $X$ does not contain any rational or elliptic curves. If $(X, D_X)$ is algebraically hyperbolic, then the complement \(X \setminus D_X\) contains no images of \(\mathbb{P}^1\) or \(\mathbb{A}^1\), and no elliptic curves. Moreoever, for the log case, the inequality gives a lower bound of the intersection number $i(C,D_X)$, which may be geometrically meaningful.

Log algebraic hyperbolicity was introduced by Chen \cite{chen2001algebraichyperbolicitylogvarieties} as a way to analyze the Kobayashi hyperbolicity of the complement, towards a result of the Kobayashi Conjecture. Log algebraic hyperbolicity is also important because it appears in the algebraic version of the Green-Griffiths-Lang Conjecture \cite{ascher2024algebraicgreengriffithslangconjecturecomplements}.

We mention some results on hyperbolicity of moduli spaces. In \cite{viehweg2003brodyhyperbolicitymodulispaces}, the authors examine Brody hyperbolicity of canonically polarized manifolds. In \cite{popa2016viehwegshyperbolicityconjecturefamilies}, the authors show that the base spaces are of log general type for families with maximal variation and fibers of general type.

\section{Log Algebraic Hyperbolicity of $\overline{M}_{0,n}$}

In this section we examine two cases  where algebraic hyperbolicity is passed on under sequences of blow-ups. One is remarked by Chen \cite{chen2001algebraichyperbolicitylogsurfaces} and is completely general for any birational pairs; the other one is more specific to $\overline{M}_{0,n}$. In the second case we have a better description of the polarization, namely the ample line bundle defining degree.

In \cite{chen2001algebraichyperbolicitylogsurfaces}, Chen defines log algebraic hyperbolicity and briefly remarks that log algebraic hyperbolicity does not depend on the pair, instead it only depends on the complement. We provide a short proof.

\begin{proposition}[{\cite[p.~4]{chen2001algebraichyperbolicitylogsurfaces}}]
If $(X, D_X)$ and $(Y, D_Y)$ satisfy $X \setminus D_X \cong Y \setminus D_Y$, then $(X, D_X)$ is algebraically hyperbolic if and only if $(Y, D_Y)$ is.
\end{proposition}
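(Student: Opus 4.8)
The plan is to prove both implications at once: since the hypothesis $X\setminus D_X\cong Y\setminus D_Y$ is symmetric, it suffices to assume $(Y,D_Y)$ is algebraically hyperbolic — with constant $\epsilon_Y>0$ and, after replacing it by a multiple, a \emph{very} ample polarization $A_Y$ — and to deduce the same for $(X,D_X)$. Write $U:=X\setminus D_X$ and identify it with $Y\setminus D_Y$ via the given isomorphism $\phi$. The argument rests on two points: (1) the quantity $2g(C)-2+i(C,D_X)$ attached to an integral curve $C\not\subseteq D_X$ depends only on the abstract variety $C\cap U$; and (2) if $C'\subseteq Y$ is the curve corresponding to $C$, then $\deg C$ is at most a fixed multiple of $\deg C'$, for appropriate polarizations.

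For (1): let $f\colon\widehat C\to X$ be the normalization composed with the inclusion, and set $C^\circ:=C\cap U$, a dense open of $C$. As normalization commutes with restriction to opens, $f^{-1}(U)=\widehat C\setminus f^{-1}(D_X)$ is the normalization of $C^\circ$, and $\widehat C$ is the unique smooth projective curve containing it as a dense open; hence $g(C)=g(\widehat C)$ and $i(C,D_X)=\#\bigl(\widehat C\setminus f^{-1}(U)\bigr)$ depend only on $C^\circ$. Applying this to $C$ and to $C':=\overline{\phi(C^\circ)}\subseteq Y$ — an integral curve, not contained in $D_Y$ since $C'\cap U=\phi(C^\circ)\neq\varnothing$, and with $C'\cap U\cong C^\circ$ — gives $2g(C)-2+i(C,D_X)=2g(C')-2+i(C',D_Y)$.

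For (2): fix a very ample $A_X=\mathcal O_X(1)$ realizing an embedding $X\hookrightarrow\mathbb P^M$; this is the polarization I will use on $X$. The composite rational map $\psi\colon Y\dashrightarrow X\hookrightarrow\mathbb P^M$ is given by sections $\tau_0,\dots,\tau_M\in H^0(Y,A_Y^{\otimes\ell})$ for some $\ell\ge 1$, which I may take to have no common divisorial zero; their common zero locus is then the indeterminacy locus of $\psi$, which lies in $D_Y$ because $\psi$ is a morphism over $U$. Let $\mu\colon\widetilde Y\to Y$ be the closure of the graph of $\psi$, so $\mu$ is proper birational and an isomorphism over $U$, and the second projection factors as $\nu\colon\widetilde Y\to X$. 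The morphism $\nu$ is given by the sections $\mu^*\tau_i$ of $\mu^*A_Y^{\otimes\ell}$, whose common zero divisor $F$ is therefore effective with $\operatorname{Supp}(F)\subseteq\mu^{-1}(D_Y)$ and satisfies $\mu^*A_Y^{\otimes\ell}=\nu^*A_X+F$. Now given $C\not\subseteq D_X$ with corresponding $C'=\overline{\phi(C^\circ)}$ — so $C$ is the $\psi$-transform of $C'$ — let $\widetilde{C'}\subseteq\widetilde Y$ be the strict transform of $C'$; it is not contained in $\mu^{-1}(D_Y)$, and $\mu_*\widetilde{C'}=C'$, $\nu_*\widetilde{C'}=C$. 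By the projection formula,
\[
\deg_{A_X}C=\nu^*A_X\cdot\widetilde{C'}=(\mu^*A_Y^{\otimes\ell}-F)\cdot\widetilde{C'}=\ell\deg_{A_Y}C'-F\cdot\widetilde{C'}\le\ell\deg_{A_Y}C',
\]
since $F\cdot\widetilde{C'}\ge0$ as $F$ is effective and $\widetilde{C'}\not\subseteq\operatorname{Supp}(F)$.

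Combining (1) and (2): for every integral $C\subseteq X$ with $C\not\subseteq D_X$,
\[
2g(C)-2+i(C,D_X)=2g(C')-2+i(C',D_Y)\ge\epsilon_Y\deg_{A_Y}C'\ge\tfrac{\epsilon_Y}{\ell}\deg_{A_X}C,
\]
so $(X,D_X)$ is algebraically hyperbolic with $\epsilon=\epsilon_Y/\ell$ and polarization $A_X$; exchanging $X$ and $Y$ gives the converse. The step I expect to require real care is (2): a birational map does not preserve degrees, and the whole content is that the discrepancy divisor $F$ between $\mu^*A_Y^{\otimes\ell}$ and $\nu^*A_X$ is \emph{effective} and \emph{supported over the boundary}, so that it pairs non-negatively with the transform of any curve not contained in the boundary.
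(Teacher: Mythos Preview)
Your two–step strategy is sound and, once patched, arguably more direct than the paper's. Step~(1) is correct. Step~(2) contains a real slip: you assert that $\psi$ is given by sections $\tau_i\in H^0(Y,A_Y^{\otimes\ell})$ \emph{with no common divisorial zero}. Stripping the base divisor forces the underlying line bundle to be the canonical one, namely the unique extension $L$ of $\psi^*\mathcal O_{\mathbb P^M}(1)$ from the domain of definition to $Y$; there is no reason $L$ should be a power of the pre-chosen $A_Y$ once $\mathrm{rk}\,\mathrm{Pic}(Y)>1$. If instead you allow a common divisor (so that the bundle is genuinely $A_Y^{\otimes\ell}$), that fixed part need not lie in $D_Y$, and your conclusion $\operatorname{Supp}(F)\subseteq\mu^{-1}(D_Y)$ breaks down. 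The repair is immediate: run your graph argument with $L$ to get $\mu^*L=\nu^*A_X+F$ with $F$ effective and exceptional for $\mu$, hence $\deg_{A_X}C\le L\cdot C'$; then, since $A_Y$ is ample, choose $\ell$ with $\ell A_Y-L$ ample on $Y$, so $L\cdot C'\le\ell\,\deg_{A_Y}C'$. (If $Y$ is not assumed normal, pass first to a normalization or resolution so that $L$ extends over the codimension~$\ge 2$ indeterminacy locus.)

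By contrast, the paper reduces both pairs to a common smooth log resolution $(S,D_S)$, obtained by resolving the closure of the graph of $X\setminus D_X\cong Y\setminus D_Y$ inside $X\times Y$, and then treats only the case of a birational morphism $f:S\to X$. There it manufactures an ample class $L_S=f^*L-aE$ on $S$ using an effective $f$-exceptional $E$ with $-E$ relatively ample, and sandwiches degrees via $(1-aM)(f^*L\cdot C)\le L_S\cdot C\le f^*L\cdot C$. Your graph-based comparison avoids the detour through a smooth model and handles both directions at once; the paper's construction has the side benefit of showing that the hyperbolicity constant on the blow-up can be taken arbitrarily close to the original one.
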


\begin{proof}
By resolving to a common log resolution of $(X,D_X)$ and $(Y,D_Y)$, $f:S\to X$ and $g:S\to Y$ such that $ Supp(f_*^{-1} D_X + E_f) =Supp(g_*^{-1} D_Y + E_g)$ (consider the log resolution of the closure of the graph $\Gamma \subset X\times Y$ of the isomorphism $X\backslash D_X\cong Y\backslash D_Y$), it suffices to consider the case where we obtain $(S, D_S)$ from $(X,D_X)$ through a sequence of blow-ups, where $D_S = f_*^{-1} D_X + E_f$, and we show that $(S,D_S)$ is algebraically hyperbolic if and only if $(X, D_X)$ is algebraically hyperbolic.

We note that log algebraic hyperbolicity is independent of the ample divisor. We consider an ample line bundle $L$ on $X$, then the pullback $A = f^*L$ is base-point-free and big but not necessarily ample. There is a positive linear combination  $E$ of the exceptional divisors such that $-E$ is $f$-ample \cite[section 44]{Kollar-CanonicalModels-40}.
By \cite[Proposition 1.7.10]{LazarsfeldI}, there exists $M\geq 0$, such that $mf^*L-E$ is ample for all $m\geq M$. Then choose any $a\in (0,1/M]$, we have that $L_S := f^*L - a E$ is ample.

For any curve $C$ not entirely contained in $D_S$, we have that
\[
(1-aM)(A \cdot C) \le L_S \cdot C \le A \cdot C.
\]

If $(X,D_X)$ is algebraically hyperbolic with some $\epsilon$ with respect to $L$, then $(S,D_S)$ is algebraically hyperbolic with respect to the same $\epsilon$ and $L_S = f^*L - aE$. If $(S, D_S)$ is algebraically hyperbolic with respect to $L_S$ for some $\epsilon$, then $(X, D_X)$ is algebraically hyperbolic with respect to $L$, for $\epsilon(1-aM)$ which can be made arbitrarily closed to $\epsilon$.
\end{proof}
\begin{remark}
    In the previous proposition, even though we have shown that algebraic hyperbolicity is passed on between pairs with isomorphic complements, but it doesn't yield a formula of the polarization. Finding an $\epsilon$ under some known polarization is important as it gives geometric meaning to the moduli space. For example, in our case, for any curve $C$ inside $\overline{M}_{0,n}$ not entirely contained in the boundary $\Delta$, $C$ gives a one-parameter family of stable rational curves with $n$ markings whose general element is smooth. The algebraically hyperbolic condition of $(\overline{M}_{0,n}, \Delta)$ means that $2g(C)-2+i(C,\Delta)\geq \epsilon\cdot \mathrm{deg}_{K+\Delta}C$, where $i(C,\Delta)$ = $|f^{-1}(\Delta)|$ with $f:\widehat{C}\to \overline{M}_{0,n} $ being the normalization map. So this gives a lower bound for the number of intersection points with the boundary. 

\end{remark}
We work out the second case where we gain more information on the polarization. Consider a sequence of smooth blow-ups $X=X_0\to X_1\to X_2\to...\to X_n=Y$, $f_i:X_i\to X_{i+1}$, $f=f_{n-1}\circ f_{n-2}\circ...\circ f_0$. Given a divisor $D_Y$ on $Y$, for each $i$ we let $D_i = f_{i*}^{-1}D_{i+1}+E_i$, then $D_X = f_*^{-1}D_Y+\sum_i E_i$.
\begin{proposition}\label{prop1}
Consider a sequence of smooth blow-ups as above,
$f : (X, D_X) \to (Y, D_Y)$, 
where $D_X = f_*^{-1} D_Y + \sum_i E_i$. If each blow-up center $Z_i\subset X_i$, $\mathrm{mult}_{Z_i} D_{i} \geq \mathrm{codim}(Z_i)$, and $K_X+D_X$ and $K_Y+D_Y$ are ample, then $(Y, D_Y)$ being algebraically hyperbolic with respect to the polarization $K_Y+D_Y$ implies that
$(X, D_X)$ is algebraically hyperbolic with the same $\epsilon$ under the polarization $K_X+D_X$.
\end{proposition}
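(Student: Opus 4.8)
The plan is to use the given factorization of $f$ into successive smooth blow-ups and argue by descending induction along the tower: granting that $(X_{i+1},D_{i+1})$ is algebraically hyperbolic with constant $\epsilon$ relative to $K_{X_{i+1}}+D_{i+1}$, I will deduce the same for $(X_i,D_i)$ relative to $K_{X_i}+D_i$. Thus everything reduces to a single smooth blow-up, which I write as $\pi:X'\to X$ with smooth center $Z$ of codimension $c$, exceptional divisor $E$, and $D':=\pi_*^{-1}D+E$, where $m:=\mathrm{mult}_Z D\geq c$. Note that $m\geq c\geq 1$ forces $Z\subseteq\mathrm{Supp}(D)$; this will be used several times.

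Take an integral curve $C'\subset X'$ not contained in $D'$ and set $C:=\pi(C')$. Since $D'\supseteq E$, the curve $C'$ is not contained in the exceptional locus, so $\pi|_{C'}$ is birational onto $C$; consequently $C\not\subset D$ (a curve over $Z$ would lie inside $E$, while off $Z$ the map $\pi$ is an isomorphism carrying $\mathrm{Supp}(D)$ to $\mathrm{Supp}(\pi_*^{-1}D)$), the normalizations agree, and $g(C')=g(C)$. For the intersection numbers, compare $\nu:\widehat{C'}\to X'$ with $\mu=\pi\circ\nu:\widehat{C'}\to X$: at a point $q$ with $\mu(q)\notin Z$ one has $\nu(q)\in\mathrm{Supp}(D')$ exactly when $\mu(q)\in\mathrm{Supp}(D)$ by the isomorphism there, while at a point with $\mu(q)\in Z$ one has $\nu(q)\in E\subseteq\mathrm{Supp}(D')$ and $\mu(q)\in Z\subseteq\mathrm{Supp}(D)$ simultaneously; hence $\nu^{-1}(D')=\mu^{-1}(D)$ and $i(C',D')=i(C,D)$. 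Finally, from the standard formulas $K_{X'}=\pi^*K_X+(c-1)E$ and $\pi^*D=\pi_*^{-1}D+mE$ we obtain
\[
K_{X'}+D'\;=\;\pi^*(K_X+D)\;-\;(m-c)E,\qquad m-c\geq 0,
\]
and since $C'\not\subset E$ gives $E\cdot C'\geq 0$, the projection formula yields $\deg_{K_{X'}+D'}C'=\deg_{K_X+D}C-(m-c)(E\cdot C')\leq\deg_{K_X+D}C$. Combining the three comparisons with algebraic hyperbolicity of $(X,D)$,
\[
2g(C')-2+i(C',D')\;=\;2g(C)-2+i(C,D)\;\geq\;\epsilon\,\deg_{K_X+D}C\;\geq\;\epsilon\,\deg_{K_{X'}+D'}C',
\]
so $(X',D')$ is algebraically hyperbolic with the same $\epsilon$ relative to $K_{X'}+D'$, which is ample by hypothesis. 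Iterating from $(Y,D_Y)$ down to $(X,D_X)$ finishes the argument.

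Once the set-up is fixed there is no genuinely hard step; the single point that must be watched is the inequality $m\geq c$, i.e.\ the multiplicity hypothesis, which is exactly what makes the coefficient of $E$ in $K_{X'}+D'-\pi^*(K_X+D)$ non-positive and hence the degree non-increasing. This is what allows the constant $\epsilon$ to be preserved for the distinguished polarization $K+D$ itself, rather than only for an auxiliary ample class produced as in the previous proposition; without it one would still get algebraic hyperbolicity of $(X',D')$, but with respect to $\pi^*A-\delta E$ for small $\delta>0$. One caveat in running the induction is that each intermediate $K_{X_i}+D_i$ must be ample for the statement to be phrased relative to it at every stage; in the cases of interest this is checked alongside the multiplicity conditions.
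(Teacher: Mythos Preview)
Your proof is correct and follows the same route as the paper: reduce to a single smooth blow-up, note that the genus is unchanged and $i(C',D')\geq i(C,D)$, and use the identity $K_{X'}+D'=\pi^*(K_X+D)-(m-c)E$ together with $E\cdot C'\geq 0$ to bound the degree. You in fact argue the sharper equality $i(C',D')=i(C,D)$ (via $Z\subseteq\mathrm{Supp}(D)$), which is fine; and your closing caveat about intermediate ampleness is harmless but unnecessary---the bare inequality $2g-2+i\geq\epsilon\deg_{K+D}$ propagates through each step without any ampleness, and ampleness of $K_X+D_X$ is only invoked at the end to call the result ``algebraically hyperbolic.''
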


\begin{proof}
It suffices to consider a single smooth blow-up $f:(X,D_X)\to (Y,D_Y)$ at the center $Z\subset Y$. For any curve $\mathcal C$ in $X$ not contained in $D_X$, the pushforward $C := f_\ast \mathcal C$ is a curve in $Y$ not contained in $D_Y$, and $\mathcal C = \widetilde{C}$, the strict transform of $C$. Since $(Y, D_Y)$ is algebraically hyperbolic, there exists $\epsilon > 0$ such that for any curve $C$ not contained in $D_Y$:
\[ 2g(C) - 2 + i(C,D_Y) \geq \epsilon (K_Y + D_Y) \cdot C , \]
where $g(C)$ is the geometric genus and $i(C,D_Y)$ counts the number of points in the normalization $\widehat{C}$ mapping to $D_Y$.

Under the blow-up, we have $g(\widetilde{C}) = g(C)$, and $i(\widetilde{C},D_X) \geq i(C,D_Y)$ since $D_X = f_*^{-1} D_Y + E$. We analyze how the degree changes with respect to the ample divisor $K_{X} + D_X$:
\[(K_{X} + D_X) \cdot \widetilde{C}  = ((f^*K_{Y} + (\mathrm{codim}(Z)-1)E) + (f^*D_Y  - \mathrm{mult}_Z(D_Y)E+E))\cdot \widetilde{C} , \]
\[= (f^*(K_{Y} + D_Y) - (\mathrm{mult}_Z(D_Y)-\mathrm{codim}(Z)) E)\cdot \widetilde{C}  \leq f^*(K_{Y} + D_Y)\cdot \widetilde{C}  = (K_{Y} + D_Y) \cdot C, \]
Therefore:
\[ 2g(\widetilde{C}) - 2 + i(\widetilde{C},D_X) \geq \epsilon (K_{X} +D_X) \cdot  \widetilde{C}. \]

\end{proof}

Recall that, for $n\geq5$, $\overline{M}_{0,n}$ can be realized as a sequence of blow-ups of $(\mathbb P^1)^{n-3}$ along loci of diagonals. We have the map  $f:(\overline{M}_{0,n},\Delta)\to ((\mathbb P^1)^{n-3},D)$, where $D=F_0\cup F_1\cup F_\infty\cup\Delta_{n-4}$. We have the polarization  $K_{\overline{M}_{0,n}}+\Delta$ for $\overline{M}_{0,n}$ and the polarization $K_{(\mathbb P^1)^{n-3}}+D$ for $(\mathbb P^1)^{n-3}$.

\begin{theorem}
$(\overline{M}_{0,n}, \Delta)$ is algebraically hyperbolic. For $n\geq 4$, we may take $\epsilon = \frac 1{(n-3)^2}$ with respect to the polarization $K_{\overline{M}_{0,n}}+\Delta$.
\end{theorem}

\begin{proof}
For $n\leq 4$, $(\overline{M}_{0,n},\Delta)$ is algebraically hyperbolic. For $n\geq5$, we have the map  $f:(\overline{M}_{0,n},\Delta)\to((\mathbb P^1)^{n-3}, D)$, where $D =F_0\cup F_1\cup F_\infty\cup\Delta_{n-4}$. By Proposition \ref{propk} in the next section, we know that $((\mathbb P^1)^{n-3}, D)$ is algebraically hyperbolic with $\epsilon  = \frac 1{(n-3)^2}$ with respect to the polarization $K_{(\PP^1)^{n-3}}+D$. Also the morphism $f$ satisfies the condition in Proposition \ref{prop1} as follows:

We verify that every blow-up center appearing in Theorem~\ref{thmk}
satisfies the inequality 
\(\mathrm{mult}_{Z_i}D_i \ge \mathrm{codim}(Z_i)\).
For any subset \(I\subset\{4,\ldots,n\}\) and 
\(\star\in\{0,1,\infty\}\),
we denote
\[
\Delta_{I,\star} := \{\,x_i=\star \text{ for all } i\in I\,\},
\] 
\[\Delta_I := \{\,x_i=x_j\text{ for all }i,j\in I\,\}.\]
Then we have that, \[\Delta_d \cap (F_0\cup F_1\cup F_\infty ) = \bigcup_{|I|=n-2-d, \star \in \{0,1,\infty\}} \Delta_{I,\star},\] \[\Delta_d = \bigcup_{|I|=n-2-d} \Delta_I.\]

We analyze the multiplicity of the boundary divisor 
\(D = F_0\cup F_1\cup F_\infty\cup \Delta_{n-4}\) along these loci.
For the first type of center,
\(\Delta_{I,\star}\subset \Delta_d \cap(F_0\cup F_1\cup F_\infty)\), 
the locus is contained in $\{x_i = \star\}\subset F_\star, i\in I$ and the pairwise diagonals 
\(\Delta_{ij}\subset \Delta_{n-4}\) for \(i,j\in I\); hence, in total, 
\[
\mathrm{mult}_{\Delta_{I,\star}}(D)
= |I| + \binom{|I|}{2}
\quad\text{and}\quad
\mathrm{codim}(\Delta_{I,\star})=|I|.
\]
Therefore 
\(\mathrm{mult}_{\Delta_{I,\star}}(D)\ge \mathrm{codim}(\Delta_{I,\star})\) for \(|I|\ge2\).

For the second type of centers, the pure diagonals \(\Delta_I\),
we have 
\(\mathrm{codim}(\Delta_I)=|I|-1\). The locus is contained in the pairwise diagonals
\(\Delta_{ij}\) for \(i,j\in I\),
so for $|I|\geq 3$,
\[
\mathrm{mult}_{\Delta_I}(D)=\binom{|I|}{2}\ge |I|-1=\mathrm{codim}(\Delta_I).
\]
Moreover, in the blow-up order given in Theorem~\ref{thmk},
none of the exceptional divisors produced in earlier steps
contains the later centers. 
Hence the multiplicity along each center is computed only from the 
strict transforms of the original boundary components and 
remains unchanged during the process.
Consequently, every blow-up center in the Keel sequence satisfies 
\(\mathrm{mult}_{Z_i} D_i \ge \mathrm{codim}(Z_i)\),
as required in Proposition~\ref{prop1}.
\end{proof}

\begin{remark}
   One may consider the blow-up structure from $\mathbb P^{n-3}$ and derive the log algebraic hyperbolicity. We have the blow-up map $f:(\overline{M}_{0,n}, \Delta)\to (\mathbb P^{n-3}, H)$, where $H$ is the union of $\binom{n-1}{n-3}$ hyperplanes: 
   
  By Proposition \ref{prop3} in the next section, we know that $(\mathbb P^{n-3}, H)$ is algebraically hyperbolic, and also the morphism $f$ satisfies the condition in Proposition \ref{prop1} as: 
For each blow-up center $Z$, i.e., a linear space of codimension $r\geq 2$ given by $n-2-r$ of the $n-1$ general points, from the blow-up construction in Theorem \ref{thm2}, the multiplicity along the linear space is equal to the multiplicity of $H$ along the linear space in $\mathbb P^{n-3}$, which is given by the number of hyperplanes containing the linear space:
\[ \mathrm{mult}_Z H = \binom{n-1 - (n-2-r)}{n-3 - (n-2-r)} = \binom{r+1}{r-1} = \frac{r(r+1)}{2} > r \text{ for } r \geq 2. \]
\end{remark}
\vspace{0.3cm}

We also state the following relevant case, which uses the anti-canonical bundle as the polarization, though it only shows log algebraic hyperbolicity for $\overline{M}_{0,5}$ and $\overline{M}_{0,6}$.

\begin{proposition}
Let $f: (X,D_X) \to (Y,D_Y)$ be a sequence of smooth blow-ups, where $D_X=f_*^{-1}D_Y+\sum_iE_i$. If $-K_Y$ is ample and $X$ is log Fano, i.e., $-(K_X+D)$ is ample for some effective divisor $D$, then $(Y,D_Y)$ being algebraically hyperbolic implies $(X,D_X\cup D)$ is algebraically hyperbolic.
\end{proposition}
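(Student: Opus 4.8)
The plan is to run the same $\epsilon$-bookkeeping as in the proof of Proposition~\ref{prop1}, but with the \emph{anti-canonical} divisors as the polarizations on both sides; this replaces the multiplicity condition on the blow-up centers by the positivity of the discrepancies of a smooth blow-up. Since algebraic hyperbolicity is independent of the choice of ample divisor (for ample $A,A'$ one has $A'\le\lambda A$ numerically for $\lambda\gg 0$, so the defining inequality transfers with a rescaled $\epsilon$), the hypotheses that $(Y,D_Y)$ is algebraically hyperbolic and that $-K_Y$ is ample give an $\epsilon>0$ with $2g(C)-2+i(C,D_Y)\ge\epsilon\,(-K_Y)\cdot C$ for every integral curve $C\subset Y$ not contained in $D_Y$. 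The goal is to deduce the same inequality on $X$ for the ample divisor $-(K_X+D)$.

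First I would set up the correspondence of curves exactly as in Proposition~\ref{prop1}. Let $\mathcal C\subset X$ be an integral curve not contained in $D_X\cup D$. Because $D_X=f_*^{-1}D_Y+\sum_iE_i$ contains every exceptional divisor of $f$, the curve $\mathcal C$ is not contained in the exceptional locus, so $f$ restricts to a birational morphism $\mathcal C\to C:=f_*\mathcal C$, i.e.\ $\mathcal C=\widetilde C$ is the strict transform of $C$, and $C$ is not contained in $D_Y$. Then $g(\widetilde C)=g(C)$, and since $f^{-1}(\mathrm{Supp}\,D_Y)\subset\mathrm{Supp}(D_X)$, every point of the normalization $\widehat C$ whose image lies on $D_Y$ has image on $D_X$, so $i(\widetilde C,D_X\cup D)\ge i(\widetilde C,D_X)\ge i(C,D_Y)$.

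Next I would carry out the degree estimate, which is where the Fano hypotheses enter. Writing $K_X=f^*K_Y+\sum_i a_i\mathcal E_i$ with all discrepancies $a_i>0$ (each step being a smooth blow-up of a center of codimension $\ge 2$), and using that $\mathcal E_i\cdot\widetilde C\ge0$ (as $\widetilde C\not\subset\mathcal E_i$) and $D\cdot\widetilde C\ge0$ ($D$ effective, $\widetilde C\not\subset\mathrm{Supp}\,D$), one gets
\[
-(K_X+D)\cdot\widetilde C=-f^*K_Y\cdot\widetilde C-\sum_i a_i\,(\mathcal E_i\cdot\widetilde C)-D\cdot\widetilde C\le -f^*K_Y\cdot\widetilde C=(-K_Y)\cdot C.
\]
Combining this with the two comparisons above and the algebraic hyperbolicity of $(Y,D_Y)$ with respect to $-K_Y$ yields
\[
2g(\widetilde C)-2+i(\widetilde C,D_X\cup D)\ge 2g(C)-2+i(C,D_Y)\ge\epsilon\,(-K_Y)\cdot C\ge\epsilon\,(-(K_X+D))\cdot\widetilde C,
\]
so $(X,D_X\cup D)$ is algebraically hyperbolic with respect to $-(K_X+D)$, with the same $\epsilon$.

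I do not anticipate a genuine obstacle: the argument is the one in Proposition~\ref{prop1} with the canonical polarizations replaced by anti-canonical ones and the multiplicity input replaced by discrepancy positivity, which is automatic for smooth blow-ups and requires no hypothesis on the centers. The only points needing a little care are (i) legitimizing the switch to the polarization $-K_Y$ on $Y$ via independence of the ample divisor, and (ii) the set-theoretic facts $f^{-1}(\mathrm{Supp}\,D_Y)\subset\mathrm{Supp}(D_X)$ and "$\widetilde C$ meets no exceptional divisor," both immediate from $D_X=f_*^{-1}D_Y+\sum_iE_i$ together with $\mathcal C\not\subset D_X$.
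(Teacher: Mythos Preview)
Your proof is correct and follows essentially the same approach as the paper: use $-K_Y$ as the polarization on $Y$, then chain $-(K_X+D)\cdot\widetilde C\le -K_X\cdot\widetilde C\le -K_Y\cdot C$ via positivity of the discrepancies $a_i$ and effectivity of $D$, combining with $g(\widetilde C)=g(C)$ and $i(\widetilde C,D_X\cup D)\ge i(C,D_Y)$. One small slip in wording: in your closing paragraph you say ``$\widetilde C$ meets no exceptional divisor'' where you mean ``$\widetilde C$ is contained in no exceptional divisor''; the body of your argument already has this right.
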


\begin{proof}
Since $(Y,D_Y)$ is algebraically hyperbolic, there exists $\epsilon > 0$ such that:
\[ 2g(C) - 2 + i(C,D_Y) \geq \epsilon (-K_Y) \cdot C . \]
We have $-K_X = -f^*K_Y - \sum_ia_iE_i$ where $a_i>0$. Thus:
\[ -K_X \cdot \widetilde{C} = -f^*K_Y \cdot \widetilde{C} - \sum_i a_iE_i \cdot \widetilde{C}  \leq -K_Y \cdot C.\]

As $X$ is log Fano, there exists an effective divisor $D$ such that $-(K_X + D)$ is ample. Therefore:
\[ 2g(\widetilde{{C}}) - 2 + i(\widetilde{C},D_X\cup D) \geq \epsilon (-K_X) \cdot \widetilde{C} \geq \epsilon (-(K_X + D)) \cdot \widetilde{C} , \]
so $(X,D_X\cup D)$ is algebraically hyperbolic. 

Note that by $D_X\cup D$, we mean an effective divisor supported on $D_X\cup D$. In the definition of log algebraic hyperbolicity only the support of the divisor is relevant.
\end{proof}

\begin{corollary}
$\overline{M}_{0,5}$ and $\overline{M}_{0,6}$ with their boundary divisors are algebraically hyperbolic.
\end{corollary}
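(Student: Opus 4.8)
The plan is to apply the preceding Proposition (the log Fano criterion) to the Kapranov blow-down maps of Theorem~\ref{thm2}, with projective space equipped with a hyperplane arrangement as the base pair. For $n=5$, Theorem~\ref{thm2} realizes $\overline{M}_{0,5}$ as the blow-up of $\mathbb{P}^2$ at four points in general position, i.e.\ the degree-$5$ del Pezzo surface, so $-K_{\overline{M}_{0,5}}$ is ample and $\overline{M}_{0,5}$ is log Fano with $D=0$. Take $Y=\mathbb{P}^2$ and $D_Y=H$, the union of the $\binom{4}{2}=6$ lines joining pairs of the four points, so that $D_X=f_*^{-1}H+\sum_i E_i=\Delta$. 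Since $-K_{\mathbb{P}^2}$ is ample and $(\mathbb{P}^2,H)$ is algebraically hyperbolic by Proposition~\ref{prop3}, the preceding Proposition gives that $(\overline{M}_{0,5},\Delta\cup 0)=(\overline{M}_{0,5},\Delta)$ is algebraically hyperbolic.

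For $n=6$, Theorem~\ref{thm2} realizes $\overline{M}_{0,6}$ as the iterated blow-up $f\colon\overline{M}_{0,6}\to\mathbb{P}^3$ along five general points $p_1,\dots,p_5$ and then along the strict transforms of the $\binom{5}{2}=10$ lines joining pairs of them, all smooth centers of codimension $\ge 2$. Set $Y=\mathbb{P}^3$ and $D_Y=H$, the union of the $\binom{5}{3}=10$ planes spanned by triples of the points, so again $D_X=f_*^{-1}H+\sum_i E_i=\Delta$. Unlike $\overline{M}_{0,5}$, the threefold $\overline{M}_{0,6}$ (of Picard number $16$) is not Fano, but it is log Fano \emph{with log boundary supported on $\Delta$}, which is what the corollary really needs: since $-(K_{\mathbb{P}^3}+cH)=(4-10c)H_0$ is ample for $c<\tfrac25$, one checks that for suitable $c$ the divisor $D:=c\,f_*^{-1}H+\sum_i(c\,m_i-a_i+\delta_i)E_i$ is effective with $-(K_{\overline{M}_{0,6}}+D)$ ample, where $a_i=\mathrm{codim}(Z_i)-1$ is the discrepancy of the $i$-th blow-up, $m_i=\mathrm{mult}_{Z_i}(H)$, and $\delta_i>0$ is tiny; here $(a_i,m_i)=(2,6)$ for the five point centers and $(1,3)$ for the ten line centers, so $a_i/m_i=\tfrac13<\tfrac25$ and any $c\in(\tfrac13,\tfrac25)$ makes every coefficient of $D$ positive. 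As $-K_{\mathbb{P}^3}$ is ample and $(\mathbb{P}^3,H)$ is algebraically hyperbolic by Proposition~\ref{prop3}, the preceding Proposition yields that $(\overline{M}_{0,6},\Delta\cup D)$ is algebraically hyperbolic, and since $\mathrm{Supp}(D)\subseteq\Delta$ this is precisely the algebraic hyperbolicity of $(\overline{M}_{0,6},\Delta)$.

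The main obstacle is exactly this verification for $\overline{M}_{0,6}$: because the conclusion of the log Fano criterion a priori involves the enlarged divisor $\Delta\cup D$, one must check that $D$ can be chosen inside $\Delta$, which comes down to the discrepancy-versus-multiplicity inequalities $\mathrm{mult}_{Z_i}(H)>\tfrac52(\mathrm{codim}\,Z_i-1)$ at every Kapranov center, plus the routine observation that no exceptional divisor of an earlier blow-up contains a later center (so that the $m_i$ and $a_i$ are as claimed and do not accumulate along the tower). One could alternatively bypass the log Fano route entirely, since $(\overline{M}_{0,5},\Delta)$ and $(\overline{M}_{0,6},\Delta)$ are already covered by Proposition~\ref{prop1} applied to the Keel tower over $(\mathbb{P}^1)^{n-3}$; the point of stating the corollary here is to illustrate the log Fano criterion under the anticanonical polarization.
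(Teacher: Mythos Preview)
Your proof is correct and follows the same high-level strategy as the paper: apply the log Fano proposition to the Kapranov tower, using that $\overline{M}_{0,5}$ is Fano and $\overline{M}_{0,6}$ is log Fano with boundary supported on $\Delta$. The paper's proof is terser: for $n=6$ it simply cites Keel--McKernan that $-(K_{\overline{M}_{0,6}}+\varepsilon\Delta)$ is ample for small $\varepsilon>0$, and invokes the proposition without spelling out the base pair. You instead \emph{construct} the log Fano boundary $D$ by hand, computing the discrepancies $a_i$ and multiplicities $m_i$ along the Kapranov centers, observing $a_i/m_i=\tfrac13<\tfrac25$ uniformly, and choosing $c\in(\tfrac13,\tfrac25)$ so that $-(K_X+D)=f^*((4-10c)H_0)-\sum\delta_iE_i$ is ample with $D$ effective and supported on $\Delta$. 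This buys self-containment (no appeal to \cite{keel1996contractibleextremalraysoverlinem0n}) and makes transparent why $D$ lands inside $\Delta$; the paper's version buys brevity. One minor caution: for the tower, ``$\delta_i>0$ tiny'' should be read as choosing the $\delta_i$ proportional to a fixed $f$-ample combination $-\sum\lambda_iE_i$ and then scaling, rather than arbitrary small positives, but this is routine and the paper itself uses the same device in the proof of Proposition~3.1.
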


\begin{proof}
The space $\overline{M}_{0,n}$ is log Fano if and only if $n \leq 6$. The space $\overline{M}_{0,5}$ is Fano, so it with its boundary is algebraically hyperbolic. For sufficiently small $\varepsilon>0$, the divisor $\bigl(K_{\overline{M}_{0,6}} + \varepsilon \Delta)$ is ample \cite{keel1996contractibleextremalraysoverlinem0n}. Therefore, by the previous proposition, we have that $(\overline{M}_{0,6},\Delta)$ is algebraically hyperbolic. 
\end{proof}

\section{Algebraic hyperbolicity of $((\mathbb P^1)^{n-3},D)$ and $(\mathbb P^{n-3},H)$}
\subsection{Algebraic hyperbolicity of $((\mathbb P^1)^{n-3},D)$}\mbox{}\\
Recall that $M_{0,n} \hookrightarrow (\mathbb{P}^1)^{n-3}$ and $D = (\mathbb{P}^1)^{n-3}\setminus M_{0,n}  = F_0\cup F_1\cup F_\infty\cup \Delta_{n-4}$.

\begin{proposition}\label{propk}
The pair $((\mathbb P^1)^{n-3},D)$ is algebraically hyperbolic and we have $\epsilon = \frac 1{(n-3)^2}$ under the polarization $K_{(\mathbb P^1)^{n-3}}+D$.
\end{proposition}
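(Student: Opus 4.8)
The plan is to bound the three quantities $g(C)$, $i(C,D)$, and $\deg_{K+D}C$ for an arbitrary integral curve $C\subset(\mathbb P^1)^{n-3}$ not contained in $D$, working with the natural projections $\mathrm{pr}_j:(\mathbb P^1)^{n-3}\to\mathbb P^1$, $j=1,\dots,n-3$. First I would record that $K_{(\mathbb P^1)^{n-3}}+D$ is ample: indeed $K_{(\mathbb P^1)^{n-3}}=\OO(-2,\dots,-2)$, while each $F_\star$ ($\star\in\{0,1,\infty\}$) has class $\OO(1,\dots,1)$, so $F_0+F_1+F_\infty$ already contributes $\OO(3,\dots,3)$, and $\Delta_{n-4}$ is effective; hence $K+D$ is ample, and in fact $\deg_{K+D}C = (K+D)\cdot C \le \deg_{F_0+F_1+F_\infty}C + \deg_{\Delta_{n-4}}C$. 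The key numerical input is that if $d_j := \deg(\mathrm{pr}_j|_C) = \mathrm{pr}_j^*\OO(1)\cdot C \ge 0$, then $\deg_{F_0+F_1+F_\infty}C = 3\sum_j d_j$ and, crucially, $\deg_{K+D}C$ can be controlled by $(n-3)^2\bigl(2g(C)-2+i(C,D)\bigr)$ once we relate the $d_j$ and the intersection with $\Delta_{n-4}$ to the Riemann–Hurwitz data of the maps $\mathrm{pr}_j|_{\widehat C}:\widehat C\to\mathbb P^1$.

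The heart of the argument is a Riemann–Hurwitz / Hurwitz-type estimate. For each $j$, let $\phi_j:\widehat C\to\mathbb P^1$ be the composition of the normalization with $\mathrm{pr}_j$; if $\phi_j$ is non-constant of degree $d_j$, then $2g(C)-2 = -2d_j + \deg R_{\phi_j}$ where $R_{\phi_j}$ is the ramification divisor, and moreover the preimages $\phi_j^{-1}(\{0,1,\infty\})$ are all contained in $f^{-1}(F_0\cup F_1\cup F_\infty)\subset f^{-1}(D)$, so that the number of distinct such preimage points is at most $i(C,D)$. A standard counting argument (the same one used to prove that $\mathbb P^1\setminus\{0,1,\infty\}$, equivalently $M_{0,4}$, is hyperbolic, i.e. $\overline{M}_{0,4}=\mathbb P^1$ with three marked points has $2g-2+i \ge \tfrac13\deg$) gives, for each $j$ with $\phi_j$ non-constant,
\[
2g(C)-2+i(C,D)\ \ge\ d_j .
\]
Summing over the (at most $n-3$) indices $j$ for which $\phi_j$ is non-constant yields $(n-3)\bigl(2g(C)-2+i(C,D)\bigr)\ge \sum_j d_j$, hence $\deg_{F_0+F_1+F_\infty}C = 3\sum_j d_j \le 3(n-3)\bigl(2g(C)-2+i(C,D)\bigr)$. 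One must separately check that $C$ cannot have all $\phi_j$ constant (that would force $C$ to be a point), and handle the case where $C$ lies in some $F_\star$ or coordinate fiber — but such $C$ lie in $D$ and are excluded, or are handled by descending to a lower-dimensional product.

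It remains to bound $\deg_{\Delta_{n-4}}C$. Here $\Delta_{n-4}$ is the union of the pairwise diagonals $\Delta_{jk}=\{x_j=x_k\}$, each of class $\OO(\dots,1,\dots,1,\dots)$ (ones in slots $j,k$), so $\deg_{\Delta_{n-4}}C = \sum_{j<k}(d_j+d_k) = (n-4)\sum_j d_j$; combined with the above this gives $\deg_{K+D}C \le \bigl(3(n-3)+(n-3)(n-4)\bigr)\bigl(2g(C)-2+i(C,D)\bigr)$ after re-summing, and $3(n-3)+(n-3)(n-4) = (n-3)(n-1) < (n-3)^2$ is false for large $n$, so the crude "sum of all components" bound is too lossy — the fix is to not bound $\deg_{\Delta_{n-4}}C$ component-by-component but rather to note that the preimages on $\widehat C$ of the diagonal locus are also among the $i(C,D)$ boundary points, feeding them into a single Hurwitz count on a well-chosen map (e.g. a difference $x_j-x_k$, or composing with a map $\overline M_{0,n}\to\overline M_{0,4}$ remembering four suitable markings) so that the diagonal contribution is absorbed into the same $2g-2+i$ rather than added on top. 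The main obstacle I anticipate is precisely this bookkeeping: arranging the estimates so that the $\Delta_{n-4}$-intersection does not cost an extra additive factor, and tracking the combinatorics carefully enough to land on the clean constant $\epsilon = 1/(n-3)^2$ rather than something weaker; getting the constant exactly right (as opposed to merely "some $\epsilon>0$") will require choosing the optimal family of auxiliary maps $\widehat C\to\overline M_{0,4}$ and applying the genus-$0$, three-marked-point hyperbolicity estimate to each.
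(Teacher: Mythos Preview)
Your core approach---apply Riemann--Hurwitz to each coordinate projection $\phi_j:\widehat C\to\PP^1$ to obtain $2g(C)-2+i(C,D)\ge d_j$, then sum to get $\sum_j d_j\le (n-3)\bigl(2g(C)-2+i(C,D)\bigr)$---is exactly what the paper does. The ``too lossy'' bound you reach is not a defect of the method but a self-inflicted arithmetic slip: when you wrote $\deg_{K+D}C\le \deg_{F_0+F_1+F_\infty}C+\deg_{\Delta_{n-4}}C$ you discarded the \emph{negative} contribution of $K_X\sim -2\sum_j H_j$. If instead you compute the class exactly,
\[
K_X+D \;\sim\; \Bigl(-2+3+(n-4)\Bigr)\sum_{j=1}^{n-3} H_j \;=\;(n-3)\sum_{j=1}^{n-3} H_j,
\]
you get $\deg_{K+D}C=(n-3)\sum_j d_j$, and your own estimate $\sum_j d_j\le (n-3)\bigl(2g-2+i(C,D)\bigr)$ immediately yields
\[
\deg_{K+D}C\le (n-3)^2\bigl(2g(C)-2+i(C,D)\bigr),
\]
which is the claimed $\epsilon=1/(n-3)^2$. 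No auxiliary maps to $\overline M_{0,4}$, no difference maps, no absorption of the diagonal contribution into a refined Hurwitz count are needed; the diagonal term $\Delta_{n-4}\sim(n-4)\sum_j H_j$ is already handled by the same bound on $\sum_j d_j$, and the canonical class cancels exactly the excess you were worried about.

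Two minor points. First, the per-coordinate inequality $2g-2+i(C,D)\ge d_j$ already uses only that $S_j\subset f^{-1}(D_0)\subset f^{-1}(D)$, so you never need to invoke the diagonal preimages at all; the paper in fact proves the stronger $2g-2+i(C,D_0)\ge \tfrac1{n-3}\deg_{K+D_0}C$ first and then upgrades via $i(C,D)\ge i(C,D_0)$ and $K+D=(n-3)(K+D_0)$. Second, the case where some $\phi_j$ is constant poses no problem: then $d_j=0$ and that summand contributes nothing, and since $C$ is a curve at least one $d_j>0$.
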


\begin{proof}
Let $X = (\mathbb P^1)^{n-3}$, $D_0 = F_0\cup F_1\cup F_{\infty}$, and $m=n-3$. Let $f:\widehat C\to (\PP^1)^{m}$ be the normalization map of some $C$ not entirely contained in $D_0$, and write $f_j:=\mathrm{pr}_j\circ f:\widehat C\to\PP^1$.
For each $j$, set $S_j:=(f_j)^{-1}\{0,1,\infty\}_{\mathrm{red}}$. Apply log Riemann-Hurwitz to 
\( f_j : (\widehat{C}, S_j) \to (\PP^1, \{0,1,\infty\}) \):
\[
K_{\widehat{C}} + S_j \ge f_j^*(K_{\PP^1} + \{0,1,\infty\}),
\]
so taking degrees,
$$
2g(\widehat{C}) - 2 + |S_j| \ge \deg f_j^*(K_{\PP^1} + \{0,1,\infty\}).
$$

Sum over \( j = 1, \ldots, m \):
$$
m(2g - 2) + \sum_{j=1}^m |S_j|
\ge \sum_{j=1}^m \deg f_j^*(K_{\PP^1} + \{0,1,\infty\})
= \deg_{K_X + D_0}(C).
$$

To relate \( \sum |S_j| \) to \( |S| = i(C,D_0) \), we note that each point of \( S \) lies in at most \( m \) of the \( S_j \)'s, hence
\[
\sum_{j=1}^m |S_j| \le m |S| = m\, i(C,D_0).
\]
So we have that
$$
2g - 2 + i(C,D_0)\ge  \frac 1m\deg_{K_X + D_0}(C).
$$ 
Let $H_i=\mathrm{pr}_i^*\mathcal O_{\PP^1}(1)$.
We have
$
K_X \;\sim\; -2\sum_{i=1}^m H_i
$
and 
$
D_0 = F_0+F_1+F_\infty \;\sim\; 3\sum_{i=1}^m H_i.
$
Each big diagonal $\Delta_{ij}=\{x_i=x_j\}$ is a divisor with class
$
\Delta_{ij}\;\sim\; H_i+H_j.
$
Summing over all $1\le i<j\le m$ gives
$
\sum_{1\le i<j\le m}\Delta_{ij} \;\sim\; (m-1)\sum_{i=1}^m H_i.
$
Therefore
$
K_X+D\;\sim\; m\sum_{i=1}^m H_i
\;=\; m (K_X+D_0).
$
Hence for any integral curve $C\subset X$ not entirely contained in $D$,
\[
\deg_{K_X+D}(C) \;=\; m\,\deg_{K_X+D_0}(C).
\]
So we have,
\[
2g-2+i(C,D)\ \ge\ 2g-2+i(C,D_0)\ \ge\ \frac 1m\deg_{K_X+D_0}(C)
\;=\; \frac 1{m^2}\,\deg_{K_X+D}(C).
\]
This proves algebraic hyperbolicity of $\bigl((\PP^1)^m,D\bigr)$ with
$\epsilon=\frac1{m^2}$ for the polarization $K_X+D$.
\end{proof}

\subsection{Algebraic hyperbolicity of $(\mathbb P^{n-3},H)$}\mbox{}\\
On a different note, one may show that $(\mathbb{P}^{n-3}, H)$ is algebraically hyperbolic using Kobayashi hyperbolicity for the complement $M_{0,n}$ of the pair derived from some hyperplane-arrangement-type arguments of $\mathbb P^{n-3}$. 

\begin{definition}[\cite{Ko98}, p.~138]
A set of hyperplanes $H_1,\ldots,H_N$ in $\mathbb{P}^n_{\mathbb{C}}$ is in \emph{hyperbolic configuration} (condition (h)) if every projective line $\ell$ in $\mathbb{P}^n_{\mathbb{C}}$ intersects $\bigcup H_i$ in at least three points. It is in \emph{hyperbolic-imbedding configuration} (condition (hi)) if every projective line $\ell$ intersects $\bigcup_{H_i \not\supset \ell} H_i$ in at least three points.
\end{definition}

The following theorems connect (hi) to Kobayashi hyperbolicity and log algebraic hyperbolicity:

\begin{theorem}[\cite{Ko98}, p.~143]\label{thm}
Given hyperplanes $H_1,\ldots,H_N$ in $\mathbb{P}^n_{\mathbb{C}}$, set $X = \mathbb{P}^n_{\mathbb{C}} \setminus \bigcup H_i$. If the hyperplanes are in hyperbolic-imbedding configuration, then $X$ is complete hyperbolic and hyperbolically imbedded in $\mathbb{P}^n_{\mathbb{C}}$. Conversely, if $X$ is hyperbolically imbedded in $\mathbb{P}^n_{\mathbb{C}}$, then the hyperplanes are in hyperbolic-imbedding configuration.
\end{theorem}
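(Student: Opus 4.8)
The plan is to establish the two implications separately, and in both to reduce from $\mathbb{P}^n$ to lines, invoking the classical fact that $\mathbb{P}^1$ with at least three points removed is complete hyperbolic and hyperbolically imbedded in $\mathbb{P}^1$, while $\mathbb{P}^1$ with at most two points removed is not.

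For the (easier) converse I would argue by contraposition. Suppose $\{H_i\}$ is not in hyperbolic-imbedding configuration, so some line $\ell$ meets $\bigcup_{H_i \not\supseteq \ell} H_i$ in at most two points. If $\ell$ is contained in no $H_i$, then $X \cap \ell$ is $\mathbb{P}^1$ with at most two points removed, hence receives a nonconstant holomorphic map from $\mathbb{C}$; composing with $X \cap \ell \hookrightarrow X$ shows $X$ is not Brody hyperbolic, hence not hyperbolically imbedded in $\mathbb{P}^n$. If instead $\ell \subseteq H_{i_0}$ for some $i_0$, then $X \cap \ell = \varnothing$, and I would deform $\ell$ inside a generic plane through it to a sequence $\ell_k \to \ell$ of lines lying in no $H_i$: as $k \to \infty$ the points $\ell_k \cap H_i$ for those $H_i$ containing $\ell$ all collide to a single point, so the hyperbolic metrics of the curves $X \cap \ell_k$ degenerate, and a Brody reparametrization extracts from these a nonconstant entire curve with image in $\overline{X}$ that violates the hyperbolic-imbedding criterion for $X \subseteq \mathbb{P}^n$. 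Alternatively, since hyperbolic imbeddedness is inherited by the closed submanifold $H_{i_0}$, this case can be folded into an induction on $n$.

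For the forward direction I would induct on $n$, the base case $n = 1$ being the one-variable statement above. Assume $\{H_i\}$ is in (hi) configuration in $\mathbb{P}^n$ and that $X = \mathbb{P}^n \setminus \bigcup_i H_i$ is not hyperbolically imbedded. The Brody reparametrization lemma produces a nonconstant Brody curve $g : \mathbb{C} \to \mathbb{P}^n$ that is a locally uniform limit of rescaled disks mapping into $X$; a Hurwitz argument applied to the divisors $g^* H_i$ shows that for each $i$ either $g(\mathbb{C}) \subseteq H_i$ or $g(\mathbb{C}) \cap H_i = \varnothing$. Let $L$ be the smallest linear subspace containing $g(\mathbb{C})$. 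Then $g$ omits every trace $H_i \cap L$ with $L \not\subseteq H_i$, and the key combinatorial observation is that the (hi) condition in $\mathbb{P}^n$ forces the trace arrangement $\{H_i \cap L : L \not\subseteq H_i\}$ to satisfy the line condition (h), hence (hi), inside $L$. If $\dim L < n$ the inductive hypothesis makes the complement of this trace arrangement in $L$ Brody hyperbolic, forcing $g$ constant, a contradiction. If $L = \mathbb{P}^n$ then $g$ is linearly nondegenerate and omits all of $H_1, \ldots, H_N$, and here I would invoke the Borel--Cartan--Green value-distribution machinery --- equivalently Green's theorem that the complement of enough hyperplanes in general position in $\mathbb{P}^n$ is complete hyperbolic and hyperbolically imbedded --- after extracting from the (hi) configuration, relative to $g$, a subconfiguration in general position to which that machinery applies. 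Completeness of the Kobayashi metric comes out of the same boundary estimates, the metric blowing up at Poincaré rate along each $H_i$.

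The hard part will be exactly the linearly nondegenerate case $L = \mathbb{P}^n$ of the forward direction: an (hi) configuration need not be in general position, so one must show that (hi) is precisely the combinatorial condition guaranteeing that any linearly nondegenerate entire curve in $\mathbb{P}^n$ meets the arrangement, which is where the deep value-distribution input is unavoidable. The secondary technical point is the degenerate case of the converse, where the bad line lies inside one of the hyperplanes; the one-dimensional base case, the Hurwitz and reparametrization formalities, and the heredity of hyperbolic imbeddedness under restriction are all routine.
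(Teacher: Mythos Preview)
The paper does not contain a proof of this theorem: it is quoted as a known result from Kobayashi's book \cite{Ko98} (p.~143) and is used as a black box, together with the cited result of Pacienza--Rousseau, only to deduce algebraic hyperbolicity of $(\mathbb{P}^{n-3},H)$ once the combinatorial condition (hi) has been verified in Proposition~\ref{prop3}. So there is nothing in the paper to compare your proposal against.

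For what it is worth, your sketch follows the standard architecture of the proof in the literature (Brody reparametrization, Hurwitz-type dichotomy $g(\mathbb{C})\subseteq H_i$ or $g(\mathbb{C})\cap H_i=\varnothing$, reduction to the linear span of the Brody curve, and the Borel--Green input in the linearly nondegenerate case), and your identification of the nondegenerate case as the place where the real value-distribution work occurs is accurate. The one point you leave vague is the claim that from an (hi) configuration one can always ``extract a subconfiguration in general position'' to which Green's theorem applies; in the actual proof this step is handled more delicately via Borel's lemma on entire curves in complements of hyperplanes, not by a naive general-position extraction, so if you were writing this up you would need to be careful there.
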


\begin{theorem}[\cite{pacienza2006logarithmickobayashiconjecture}, Theorem 5]
Let $X$ be a projective manifold and $D$ an effective divisor such that $X \setminus D$ is hyperbolic and hyperbolically imbedded. Then there exists $\epsilon > 0$ such that for any integral curve $C \subset X$ with $C \not\subset D$:
\[ 2g(C) - 2 + i(C,D) \geq \epsilon\cdot \deg_A(C), \]
where $g(C)$ is the geometric genus of $C$, $i(C,D_X) = |f^{-1}(D_X)|$ where $f:\widehat{C}\rightarrow X$ the normalization map, $A$ an ample divisor.
\end{theorem}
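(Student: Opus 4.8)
This is essentially Theorem~5 of \cite{pacienza2006logarithmickobayashiconjecture}, so in practice the paper simply cites it; but if one wanted to reprove it, the plan is to run the classical Kobayashi--Ahlfors--Schwarz area comparison. Given an integral curve $C\subset X$ with $C\not\subset D$ and normalization $f\colon\widehat C\to X$, I would set $\widehat C^{\circ}:=\widehat C\setminus f^{-1}(D)$, an open Riemann surface of finite topological type with genus $g=g(C)$ and $i=i(C,D)$ punctures, so that $2g-2+i(C,D)=-\chi(\widehat C^{\circ})$. First I would dispose of the degenerate cases: if $-\chi(\widehat C^{\circ})\le 0$, then $\widehat C^{\circ}$ is one of $\mathbb P^1$, $\mathbb C$, $\mathbb C^{*}$, or a smooth elliptic curve, and in each case $f$ produces either a rational curve contained in $X\setminus D$ or (after composing with $\exp$ or with a universal cover) a nonconstant holomorphic image of $\mathbb C$ in $X\setminus D$; all are excluded since a hyperbolically imbedded complement is in particular Brody hyperbolic. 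Hence $2g-2+i(C,D)\ge 1$, the surface $\widehat C^{\circ}$ carries its complete Poincar\'e metric $\omega_P$ of constant curvature $-1$, and Gauss--Bonnet for finite-type hyperbolic surfaces (the cusps contributing finite area) gives $\mathrm{Area}_{\omega_P}(\widehat C^{\circ})=2\pi\bigl(2g-2+i(C,D)\bigr)$.

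Next I would set up the uniform comparison. Fix a Hermitian metric on $X$ with positive $(1,1)$-form $\omega$; using compactness of $X$ one may rescale so that $\omega\ge\omega_A$ pointwise for a fixed closed form $\omega_A\in c_1(A)$. The key input is that hyperbolic imbeddedness of $X\setminus D$ in $X$, in its infinitesimal form, supplies a constant $c>0$ with $\kappa_{X\setminus D}\ge c\,\|\cdot\|$, where $\kappa_{X\setminus D}$ is the Kobayashi--Royden pseudometric of $X\setminus D$ and $\|\cdot\|$ the length function of the fixed metric. Since $f\colon\widehat C^{\circ}\to X\setminus D$ is holomorphic and $\omega_P$ is also the Kobayashi--Royden metric of $\widehat C^{\circ}$ by uniformization, the distance-decreasing property gives $f^{*}\kappa_{X\setminus D}\le\kappa_{\widehat C^{\circ}}$; combined with the lower bound this yields $c\,f^{*}\|\cdot\|\le\|\cdot\|_P$ for the associated length functions, hence $f^{*}\omega\le c^{-2}\,\omega_P$ as area forms on $\widehat C^{\circ}$.

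Integrating over $\widehat C$ and using that $f^{-1}(D)$ is finite (hence of measure zero), this would give
\[
\deg_A(C)=\int_{\widehat C}f^{*}\omega_A\le\int_{\widehat C}f^{*}\omega=\int_{\widehat C^{\circ}}f^{*}\omega\le c^{-2}\int_{\widehat C^{\circ}}\omega_P=\frac{2\pi}{c^{2}}\bigl(2g(C)-2+i(C,D)\bigr),
\]
so $2g(C)-2+i(C,D)\ge\epsilon\,\deg_A(C)$ with $\epsilon:=c^{2}/(2\pi)$, a constant depending only on $(X,D,A)$, which is exactly the assertion.

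The one genuinely delicate step is the infinitesimal reformulation used in the second paragraph: passing from hyperbolic imbeddedness as originally defined, via the integrated Kobayashi pseudodistance, to the pointwise bound $\kappa_{X\setminus D}\ge c\,\|\cdot\|$. That requires a Brody-type reparametrization argument together with the comparison and semicontinuity results for the Kobayashi--Royden metric (Royden, Barth), and it is where I would expect most of the work to lie. The Gauss--Bonnet computation for open finite-type surfaces is classical but should be invoked precisely via the punctured-disk model at each cusp, and if one preferred a direct Ahlfors--Schwarz estimate in place of Kobayashi metrics one would have to check that the curvature normalizations are compatible. Everything else is bookkeeping.
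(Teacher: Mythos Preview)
You correctly anticipated the situation: the paper does not prove this theorem at all but simply quotes it as Theorem~5 of \cite{pacienza2006logarithmickobayashiconjecture} and moves on. Your sketch is the standard Demailly-type argument carried to the log setting, and it is essentially how Pacienza--Rousseau prove it; the identification of the delicate step (passing from hyperbolic imbeddedness to a uniform infinitesimal lower bound on the Kobayashi--Royden metric) is accurate.
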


It remains to show that $(\mathbb{P}^{n-3}, H)$ satisfies (hi). Recall that $H$ consists of $\binom{n-1}{n-3}$ hyperplanes spanned by sets of $n-3$ points from $n-1$ points in linear general position, meaning no point lies in the span of any subset of $n-3$ other points, or say any $n-2$ points form a basis for $\mathbb C^{n-2}$.

\begin{proposition}\label{prop3}
$(\mathbb{P}^{n-3}, H)$ satisfies (hi) for all $n \geq 5$, and therefore is algebraically hyperbolic.
\end{proposition}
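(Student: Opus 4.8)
The plan is to verify condition (hi) by a direct combinatorial argument in the braid (type $A_{n-2}$) model of the arrangement, and then to chain Theorem~\ref{thm} with the Pacienza--Rousseau theorem quoted above. Since $n-1$ points in linear general position in $\mathbb{P}^{n-3}$ form a projective frame, the configuration is unique up to $\mathrm{PGL}_{n-2}$, so I may take the model $\mathbb{P}^{n-3}=\mathbb{P}(W)$ with $W=\mathbb{C}^{n-1}/\mathbb{C}\mathbf{1}$, the point $p_k$ the class of $e_k$, and the hyperplane spanned by $\{p_m:m\neq i,j\}$ equal to $H_{ij}=\{[x]:x_i=x_j\}$; the linear form cutting out $H_{ij}$ is the descent $\bar\xi_{ij}$ of the root $e_i^*-e_j^*$. (As a sanity check, $\mathbb{P}^{n-3}\setminus\bigcup H_{ij}=\{[x]:x_i\neq x_j\ \forall\, i\neq j\}\cong M_{0,n}$, though this identification is not needed.)

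I would then encode a line $\ell=\mathbb{P}(U)$, with $U\subset W$ a $2$-plane: choosing a basis of $U$ and lifting it to $\mathbb{C}^{n-1}$ gives affine-linear functions $g_1(t),\dots,g_{n-1}(t)$ on $\mathbb{A}^1$, well-defined up to a common affine summand, such that $\bar\xi_{ij}$ restricts along $\ell$ to $g_i-g_j$. Hence $\ell\subset H_{ij}$ precisely when $g_i\equiv g_j$, and otherwise $\ell\cap H_{ij}$ is the single point of $\ell\cong\mathbb{P}^1$ at the parameter $t_{ij}\in\mathbb{P}^1$ where the graphs $L_i,L_j$ of $g_i,g_j$ cross (taking $t_{ij}=\infty$ when they are parallel). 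Thus (hi) for $\ell$ is equivalent to the statement that, among the $r$ distinct lines occurring in the list $L_1,\dots,L_{n-1}$, the pairwise crossings have at least three distinct $t$-coordinates.

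The argument then rests on two points. First, $r\geq 3$: if $r\leq 2$ then every nonzero difference $g_i-g_j$ is a scalar multiple of a single function (or all such differences vanish), so the forms $\bar\xi_{ij}|_U$ span a subspace of $U^*$ of dimension at most $1$; but the $\bar\xi_{ij}$ span $W^*$ (being the descended $A_{n-2}$-roots), so their restrictions span $U^*\cong\mathbb{C}^2$, a contradiction. Second, the distinct lines $L_s$ are never concurrent, even at infinity: a common point would be a point of $\ell$ lying on every $H_{ij}$, i.e.\ in $\bigcap_{i<j}H_{ij}=\{[x]:x_1=\dots=x_{n-1}\}=\varnothing$. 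Granting these, I would pick three distinct lines $L_1,L_2,L_3$; if their three pairwise crossings have distinct $t$-coordinates we are done, and otherwise two of them share a value $\tau$, which forces $L_1,L_2,L_3$ through a common point at parameter $\tau$, so by the second point (and $r\geq 3$) some further distinct line $L_s$ misses that point, and then the crossings of $L_s$ with $L_1$ and with $L_2$ lie at parameters distinct from $\tau$ and from each other, giving three distinct values. This establishes (hi); Theorem~\ref{thm} then shows $\mathbb{P}^{n-3}\setminus H$ is complete hyperbolic and hyperbolically imbedded, whence the Pacienza--Rousseau theorem yields the algebraic hyperbolicity of $(\mathbb{P}^{n-3},H)$.

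The step I expect to be the main obstacle is the bookkeeping in this last count: ensuring the crossing-point tally is robust against the degenerate configurations --- several of the $g_k$ coinciding, triples of the $L_s$ concurrent, lines parallel. Working projectively on $\ell$ so that $t=\infty$ is on equal footing with finite parameters, and isolating the ``never concurrent'' fact as a one-line consequence of the all-coordinates-equal locus being empty in $\mathbb{P}(W)$, is what makes the case analysis terminate cleanly.
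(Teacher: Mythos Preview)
Your argument is correct and genuinely different from the paper's. The paper works synthetically with the span/intersection lattice of the $n-1$ points: assuming a line $\ell$ meets $\bigcup_{H_i\not\supset\ell}H_i$ in only two points $a_1,a_2$, it locates a hyperplane through $a_1$ avoiding $a_2$, strips one vertex to drop $a_1$, and then reinserts the two remaining vertices to produce two hyperplanes forced through $a_2$ whose intersection lies back in the original hyperplane --- a contradiction. You instead pass to the braid model and parametrize $\ell$, converting (hi) into the elementary planar statement that $n-1$ affine lines with at least three distinct members and no common point must have pairwise crossings at three or more distinct abscissae. Your route is more coordinate-dependent but arguably more transparent, and the two key inputs (\,$r\ge 3$ from the roots spanning $W^*$, and non-concurrency from $\bigcap H_{ij}=\varnothing$\,) are cleanly isolated. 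One small wrinkle in your final case split: when $r=3$ and $L_1,L_2,L_3$ happen to be concurrent there is no ``further'' line $L_s$, but none is needed --- concurrency of all three already contradicts your second point --- so just say this explicitly rather than folding it into the $r\ge 4$ step.
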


\begin{proof}

Note that if $S$ and $S'$ are two subsets of the $n-1$ general points $\{p_1,p_2,...,p_{n-1}\}$ such that $S\cup S'$ contains at most $n-2$ points, then $\text{span}(S)\cap \mathrm{span}(S') = \mathrm{span}{(S\cap S')}$.

Suppose for contradiction that a line only intersects $\cup_{l \not\subseteq H_i} H_i$ at two points $a_1$ and $a_2$. We can always find a hyperplane $H$ such that $H$ avoids $a_2$, so it must contain $a_1$. This is because, if not then all $\mathrm{span}{S_i}$, where $S_i$ are $(n-3)$-subsets among the first $n-2$ points, would contain $a_2$. But $\cap \mathrm{span}(S_i) = \mathrm{span}(\cap S_i) = \mathrm{span}(\emptyset) = 0$. We can assume $H = \mathrm{span}\{p_1, p_2, \dots, p_{n-3}\}$.

We can remove a point among $\{p_1, \dots, p_{n-3}\}$ such that the span avoids $a_1$. This is because if not, then $a_1 \in \cap \mathrm{span}(S_i)$ where $S_i$ are $(n-4)$-subsets of $\{p_1, \dots, p_{n-3}\}$. We have that $\cap \mathrm{span}(S_i) = \mathrm{span}(\emptyset) = 0$. Say we can remove $p_1$, and consider the following two hyperplanes:

\[
H_1 = \mathrm{span}\{p_2, \dots, p_{n-3}\} \cup \{p_{n-2}\},
\]
\[
H_2 = \mathrm{span}\{p_2, \dots, p_{n-3}\} \cup \{p_{n-1}\}.
\]

Then $H_1$ and $H_2$ avoid $a_1$, because if not, then $H \cap H_i = \mathrm{span}\{p_2, \dots, p_{n-3}\}$ contains $a_1$ for $i=1\text{ or }2$, which is not true. So $H_1$ and $H_2$ must contain $a_2$. Thus, $a_2 \in H_1 \cap H_2 = \mathrm{span}\{p_2, \dots, p_{n-3}\} \subset H$, but $H$ avoids $a_2$.
\end{proof}

\begin{remark}
Showing the algebraic hyperbolicity of $(\PP ^{n-3}, H)$ using the method above does not explicitly yield an $\epsilon$. To find an $\epsilon$, one may attempt to classify all possible curves $f:\widehat C\to \mathbb{P}^{n-3}$ not entirely contained in $H$ and examine the relationship between $|f^{-1}(H)|$ and its degree.

For example, when $n=5$, we have that $H$ in the pair $(\mathbb{P}^2, H)$ is the union of six lines passing through four general points $p_1, p_2, p_3, p_4$. At first glance, it may seem hopeful as one can relate $|f^{-1}(H)|$ to the degree of $C$. In particular, for any curve inside $\mathbb{P}^2$ if the intersection of the curve with $H$ is transverse then we get that $|f^{-1}(H)| = 6d$. So the inequality becomes $2g(C) - 2 + 6d \geq \epsilon 3d$. We can take $\epsilon =1$, then $2g(C)-2+3d \geq -2 +3d\geq 0$. But when the intersection is not transverse, the situation is complex. Consider a general line intersecting with a curve at a node. Then in this case, we have $|f^{-1}(L)| = 2$ so it is the same as the number of intersections without the singularity. Or more explicitly, the intersection multiplicity at the nodal point is $2$, and the normalization process separates the nodal point into $2$, so the singularity does not reduce $|f^{-1}(L)|$. However, consider another case where we have a general line intersecting with a curve at a cusp; the intersection multiplicity is $2$, but under the normalization, the preimage of the cusp is just one point, so the singularity reduces $|f^{-1}(L)|$ by $1$. More generally, one depending on the singularity and the intersection behavior, the $|f^{-1}(L)|$ may even be smaller.

Under this strategy, the best one can do is to examine certain extreme cases, and argue that $\epsilon$ is at most this number. For example we may consider a conic passing through the points $p_1, p_2, p_3, p_4$. Then $|f^{-1}(H)| = 4$, $2g-2+|f^{-1}(H)| = -2+4=2 = \frac 13\deg_{K+H} C$. So $\epsilon$ is at most $\frac 13$.
\end{remark}

\bibliographystyle{plain}
\bibliography{reference}

\end{document}